
\documentclass{article}
\usepackage{graphicx}
\usepackage{amsfonts}
\usepackage{amsthm}
\usepackage{mathtools}
\usepackage{mathrsfs}
\usepackage{textcomp}
\usepackage{caption}
\usepackage{setspace}
\usepackage{bm}
\usepackage{authblk}
\newtheorem{theorem}{Theorem}
\newtheorem{lemma}{Lemma}

\newtheorem{definition}{Definition}
\newtheorem*{remark}{Remark}

\newcommand\extrafootertext[1]{%
    \bgroup
    \renewcommand\thefootnote{\fnsymbol{footnote}}%
    \renewcommand\thempfootnote{\fnsymbol{mpfootnote}}%
    \footnotetext[0]{#1}%
    \egroup
}

\begin{document}
\title{\bf The Change of Basis Groupoid}
\markright{The Change of Basis Groupoid}
\author{D.A.\ Wolfram}
\affil{\small College of Engineering \& Computer Science\\The Australian National University, Canberra, ACT 0200\\
\medskip
{\rm David.Wolfram@anu.edu.au}
}
\date{}

\maketitle
\maketitle

\begin{abstract}
We show that the change of basis matrices of a set of $m$ bases of a finite vector space is a connected groupoid of order $m^2$. 
We define a general method to express the elements of change of basis matrices as algebraic expressions using optimizations of evaluations of vector dot products.  Examples are given with orthogonal polynomials.
\end{abstract}

\extrafootertext{MSC: Primary 15A03; Secondary 20N02, 33C45}




\section{Background and Related Work}

Change of basis in a finite vector space has numerous significant and widespread applications in scientific computing and engineering.
Two application areas are image, video and data compression~\cite{strang93}, e.g.  it can be used to implement DCT-II in JPEG compression~\cite{pennebaker}; and improving properties of spectral methods for solving differential equations numerically. These properties include, better convergence~\cite{hale1}, lower computational complexity~\cite{fortunato,olver}, and better numerical stability~\cite{olver}. 


Much work has been done on numerical solutions for finding the coefficients in Legendre expansions, e.g.\ see the summary in Hale~\cite{hale}. From equation (1.2) of Hale and Townsend~\cite{hale}, these coefficients are $c_n^{leg}$ where
\[
p_N(x) = \sum_{n=0}^N c_n^{leg} P_n(x)
\] and $x \in [-1,1]$. The terms $P_n(x)$ on the right side of the equation are Legendre polynomials. The coefficients $c_n^{leg}$ are represented by floating-point numbers. The function $p_N(x)$ can also be expressed using Chebyshev polynomials of the first kind
\[
p_N(x) = \sum_{n=0}^N c_n^{cheb} T_n(x)
\] and $x \in [-1,1]$. They  provide an algorithm for implementing the transform between the coefficients $c_n^{leg}$ and $c_n^{cheb}$ with an accuracy of fifteen decimal places~\cite{hale}.

These transforms are scaled changes of bases between Legendre and Chebyshev polynomials of the first kind. For example, let $M_{TP}$ be the change of basis matrix from $P_n(x)$ to $T_n(x)$ where $0 \leq n \leq 4$,

\[
M_{TP} = 
\begin{bmatrix}
 1 & 0 & \frac14 & 0 & \frac9{64} \\
 \\
 0 & 1 & 0 & \frac38 & 0 \\
 \\
 0 & 0 & \frac34 & 0 & \frac5{16} \\
 \\
 0 & 0 & 0 & \frac58 & 0 \\
 \\
 0 & 0 & 0 & 0 & \frac{35}{64} \\
\end{bmatrix}
\mbox{ so that }
M_{TP}
\begin{bmatrix}
 c_0^{leg}\\
 \\
 c_1^{leg}\\
 \\
 c_2^{leg}\\
 \\
 c_3^{leg}\\
 \\
 c_4^{leg}
\end{bmatrix} 
= 
\begin{bmatrix}
 c_0^{cheb}\\
 \\
 c_1^{cheb}\\
 \\
 c_2^{cheb}\\
 \\
 c_3^{cheb}\\
 \\
 c_4^{cheb}
\end{bmatrix}
\]

In this paper, we are concerned with defining algebraic expressions for the elements in change of basis matrices, rather than numerical numerical methods for approximating them.

For example, from  Szeg\"{o}~\cite{szego} we can show that for the change of basis matrix from $P_n(x)$ to $T_n(x)$, we have
\[
P_n(x) = \sum_{k=0}^{\lfloor \frac{n}2 \rfloor} \beta(n, k) T_{n - 2k}(x)
\] where 
\begin{equation}
\beta(n, k) =  \left\{
\begin{array}{ll}  
2^{1 - 2n}{{2k} \choose k}{{2n - 2k} \choose {n-k}} & \mbox{if $2k < n$}\\
\\
2^{- 2n}{{n} \choose {\frac{n}{2}}}^2 & \mbox{if $2k = n$}\\
\end{array}
\right.
\end{equation} and $n \geq 0$. The function $\beta$  is defined by an algebraic expression and it evaluates to the elements  in the upper triangle of $M_{TP}$, e.g. $\beta(3, 1) = \frac38$ and $\beta(4, 2) = \frac{9}{64}$. There are other equivalent expressions for $\beta$, e.g.~\cite[equation 22.3.13]{as}.

Apart from some specific cases such as this, there do not seem to be similar algebraic expressions for transforms between other bases of orthogonal polynomials, such as between Hermite polynomials and Chebyshev polynomials.

This leads to the question of finding the structure for changes of bases of a finite-dimensional vector space. We show that this structure is a connected groupoid~\cite{brandt,brown} that we call the ``change of basis groupoid''.

We focus on finding the elements of the change of basis matrices  for  transforms between any pair of bases for the same vector space as evaluations of a function expressed algebraically. We call these functions ``coefficient functions''. The function  $\beta$ above is an example of this.

We then consider optimizations  for finding coefficient functions based on excluding elements that are known to be zero. 
The first of these involves excluding elements in the lower triangles of matrices. This occurs when the basis polynomials have definite parity and we are only interested in even or odd functions. This is related to the  Schuster-Dilts triangular matrix acceleration optimization that is used in representing a function with spherical harmonics~\cite[\S18.12.6]{boyd}.  Other optimizations are excluding specific terms in basis polynomials, and excluding terms when one basis has vectors with definite parity and the other does not. An example of this is a change of basis between Hermite and  Laguerre polynomials.

Related research has concerned defining recurrence relations for finding the coefficients of  series expansions expressed with  orthogonal polynomials~\cite{lewanowicz,salzer}.


%

\section{Change of Basis is Inherent in Defining a Basis}

We consider vector spaces with a basis of the form $B = \{s_0, s_1, \ldots \}$ where each vector $v$ in the vector space can be defined uniquely as a finite linear combination of the basis vectors, i.e.
\begin{equation} \label{lincomb}
v = \sum_{k= 0}^n a_k s_k
\end{equation} where the $a_k \in \mathbb{R}$.

The coordinates of vector $v$ with respect to $B$ are another way to represent this.
\begin{definition}
Let $V$ be a vector space of finite dimension with coefficients over a given field $F$, and $B = \{v_0, \ldots, v_n\}$ be a basis of $V$.
For all vectors $v \in V$ such that $v = \sum_{k= 0}^n a_k v_k$ where $a_k \in F$ and $0 \leq k \leq n$, the coordinate vector of $v$ is
\[
(a_0, a_1, \ldots, a_n).
\] 
\end{definition}
Generally, the set of  basis vectors  $\{s_0, s_1, \ldots \}$  is defined using another known basis. When the vector space is $\mathbb{R}^n$, this is usually the standard basis. When the vector space is the ring of polynomials $\mathbb{R}[X]$, the basis is usually the monomials $\{1, x, x^2, \ldots \}$. 

\begin{remark}
We assume that a basis $\{v_0, v_1, \ldots, v_{n-1}\}$ of a vector space is an ordered basis, so that a coordinate vector of the form $(c_0, c_1, \ldots, c_{n-1})$ with respect to this basis represents the vector $c_0 v_0  + c_1 v_1 + \cdots c_{n-1} v_{n-1}$.
\end{remark}

Defining a vector basis inherently involves defining a change of basis from a known basis. For example, consider the vector basis of the Chebyshev polynomials of the first kind:
\begin{align*}
T_0(x) =& 1\\
T_1(x) =& x\\
T_2(x) =& 2x^2 -1\\
T_3(x) =& 4x^3 - 3x
\end{align*}

These basis vectors are defined in terms of the monomials. In matrix form we have a change of basis matrix from Chebyshev polynomials $T$ up to $T_3(x)$ to the monomials $M$ where the transposes of the coefficient vectors  of the domain basis vectors form its columns.
\begin{gather*}
\begin{bmatrix}
1 & 0 & -1 & \phantom{-}0\\
0 & 1 & \phantom{-}0 & -3\\
0 & 0 & \phantom{-}2 & \phantom{-}0\\
0& 0 & \phantom{-}0& \phantom{-}4
\end{bmatrix}
\end{gather*}

This defines a mapping from the coordinates vectors defined using the basis $\{T_0(x), \ldots, T_3(x)\}$ to the coordinate vectors of the same vector using the basis $\{1, x, x^2, x^3\}$. For example
\begin{gather*}
\begin{bmatrix}
1 & 0 & -1 & \phantom{-}0\\
0 & 1 & \phantom{-}0 & -3\\
0 & 0 & \phantom{-}2 & \phantom{-}0\\
0& 0 & \phantom{-}0& \phantom{-}4
\end{bmatrix}
\begin{bmatrix}
\phantom{-}2\\
\phantom{-}0\\
-1\\
\phantom{-}1
\end{bmatrix} = 
\begin{bmatrix}
\phantom{-}3\\
-3\\
-2\\
\phantom{-}4
\end{bmatrix} 
\end{gather*} and

\[
T_3(x) - T_2(x) + 2 T_0(x)  = 4x^3 - 2x^2 - 3x + 3 
\] so that $(2, 0, -1, 1)$ is mapped to $(3, -3, -2, 4)$.

\subsection{Change of Basis Matrix}
In the following, we consider triangular bases of a vector space $V$.
Defining a  basis $\{s_0, s_1, \ldots, s_{n-1} \}$ of $V$  involves defining a mapping of it to another  basis $\{t_0, t_1, \ldots , t_{n-1} \}$ of $V$.
The standard basis for defining a basis in  $\mathbb{R}^n$ , or using the monomial basis for defining a basis in $\mathbb{R}[X]$ are just two possibilities. 

Instead of the monomials,  we can use the shifted Legendre polynomials for example to represent the basis of Chebyshev polynomials of the first kind. The first four shifted Legendre polynomials represented using monomials are
\begin{align*}
\tilde{P}_0(x) =& 1\\
\tilde{P}_1(x) =& 2x -1\\
\tilde{P}_2(x) =& 6x^2 - 6x +1\\
\tilde{P}_3(x) =& 20x^3 - 30x^2 + 12x -1.
\end{align*}

We express $T_3(x)$ uniquely in terms of the shifted Legendre polynomials by 
\[
T_3(x) = \frac{1}{5} \tilde{P}_3(x) + \tilde{P}_2(x) +\frac{3}{10}\tilde{P}_1(x) - \frac{1}{2} \tilde{P}_0(x)
\] and similarly for all other basis vectors in $\{T_0(x), T_1(x), \ldots \}$.

A change of basis matrix 
maps the coordinate vector of a vector $v$ defined using the basis of Chebyshev polynomials $T$  to its coordinate vector when $v$ is defined using the basis of shifted Legendre polynomials. For example, when $n = 4$, this matrix is 
\begin{gather*}
\begin{bmatrix}
1 & \frac12 & -\frac13 & -\frac12\\
\\
0 & \frac12 & \phantom{-}1 & \phantom{-}\frac3{10}\\
\\
0 & 0 & \phantom{-}\frac13 & \phantom{-}1\\
\\
0& 0 & \phantom{-}0& \phantom{-}\frac15
\end{bmatrix}
\end{gather*}

\section{Change of Basis Groupoid}

Let $V$ be a vector space that has bases $s = \{s_0, s_1, \ldots, s_{n}\}$ and $t = \{t_0, t_1, \ldots , t_{n}\}$.

The following equation expresses the mapping of a change of basis matrix $M_{ts}$.
\begin{equation}
M_{ts} b_s = a_t 
\end{equation}
The subscripts are simple types~\cite{church}. The columns of $M_{ts}$ are the transposed coordinate vectors with respect to $t$ of the basis vectors of $s$. 
%


\begin{lemma} \label{inv-cob}
For every change of basis matrix $M_{ts}$, its inverse $M_{ts}^{-1}$ exists and $M_{ts}^{-1} = M_{st}$.
\end{lemma}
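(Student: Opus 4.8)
The plan is to establish the claim in two stages: first that $M_{ts}$ is invertible, and second that its inverse is precisely $M_{st}$. The key observation is the defining equation $M_{ts} b_s = a_t$, which says that $M_{ts}$ maps the coordinate vector of any fixed vector $v$ with respect to the basis $s$ to the coordinate vector of the \emph{same} vector $v$ with respect to the basis $t$. This is the semantic content I would exploit throughout.

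\textbf{Invertibility.} The columns of $M_{ts}$ are, by definition, the coordinate vectors with respect to $t$ of the basis vectors $s_0, \ldots, s_n$. Since $s$ is a basis, its vectors are linearly independent in $V$; because the coordinate map $v \mapsto (\text{coords of } v \text{ w.r.t. } t)$ is a linear isomorphism from $V$ onto $F^{n+1}$, it carries the linearly independent set $\{s_0,\ldots,s_n\}$ to a linearly independent set of vectors in $F^{n+1}$. Hence the columns of $M_{ts}$ are linearly independent, the matrix is square of full rank, and $M_{ts}^{-1}$ exists.

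\textbf{Identifying the inverse.} I would argue that the composite $M_{st} M_{ts}$ is the identity. Fix an arbitrary vector $v \in V$, and let $b_s$ be its coordinate vector with respect to $s$. By the defining equation applied to $M_{ts}$, we have $M_{ts} b_s = a_t$, where $a_t$ is the coordinate vector of $v$ with respect to $t$. Applying the defining equation to $M_{st}$ now sends the coordinate vector of $v$ with respect to $t$ back to the coordinate vector of $v$ with respect to $s$, i.e.\ $M_{st} a_t = b_s$. Combining these gives $M_{st} M_{ts} b_s = b_s$ for every $v$, and since every vector of $F^{n+1}$ arises as some coordinate vector $b_s$ (the coordinate map is onto), $M_{st} M_{ts}$ fixes every vector of $F^{n+1}$ and is therefore the identity matrix. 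Given that $M_{ts}$ is already known to be invertible, a one-sided inverse is the two-sided inverse, so $M_{st} = M_{ts}^{-1}$.

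The proof is essentially bookkeeping once the interpretation of the matrices as coordinate-change maps is made precise, so I do not anticipate a serious obstacle. The one point requiring care is to keep the roles of $s$ and $t$ straight: the defining equation $M_{ts} b_s = a_t$ must be invoked with the subscripts correctly swapped for $M_{st}$, and one should verify that the same underlying vector $v$ is being tracked through both applications. If desired, the argument can be made fully explicit by composing the coordinate isomorphisms $\varphi_s, \varphi_t \colon V \to F^{n+1}$, writing $M_{ts} = \varphi_t \circ \varphi_s^{-1}$ and $M_{st} = \varphi_s \circ \varphi_t^{-1}$, whereupon $M_{st} M_{ts} = \varphi_s \circ \varphi_t^{-1} \circ \varphi_t \circ \varphi_s^{-1} = \mathrm{id}$ follows immediately.
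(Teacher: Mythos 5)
Your proposal is correct. The invertibility half is the same as the paper's: both observe that the columns of $M_{ts}$ are the coordinate vectors with respect to $t$ of the basis $s$, hence linearly independent, hence the square matrix is invertible (the paper cites the Fundamental Theorem of Invertible Matrices; you justify the linear independence of the columns via the coordinate isomorphism, which is a welcome extra step). The identification of the inverse differs in logical structure. The paper starts from $M_{ts} b = a$, left-multiplies by $M_{ts}^{-1}$ to get $M_{ts}^{-1} a = b$, and then concludes that $M_{ts}^{-1}$ ``is the matrix that can be similarly used to find $b$,'' i.e.\ it plays the role of $M_{st}$ and therefore equals it --- an argument that implicitly appeals to the uniqueness of the change of basis matrix, which the paper only establishes afterwards (Lemma~\ref{basis-unique}, whose proof in turn invokes Lemma~\ref{inv-cob}). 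Your route instead verifies directly that $M_{st} M_{ts}$ fixes every coordinate vector and hence is the identity, then uses the fact that a one-sided inverse of an invertible matrix is the inverse. This buys you a self-contained argument with no appeal to uniqueness and no risk of circularity, at the cost of needing the (easy) surjectivity of the coordinate map; the closing remark writing $M_{ts} = \varphi_t \circ \varphi_s^{-1}$ makes the whole thing transparent and is arguably the cleanest formulation of either proof.
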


\begin{proof}

$M_{ts}$ is a square matrix whose columns are the transposed coordinate vectors with respect to $t$ of the basis vectors of $s$ . These columns are linearly independent, so that $M_{ts}$ is invertible by the Fundamental Theorem of Invertible Matrices, e.g \cite[Theorem 3.27]{poole}.


To show $M_{ts}^{-1} = M_{st}$, consider
any column matrices $a$ and $b$
such that $M_{ts} b = a$. 

Now $M_{ts}^{-1}M_{st} b = M_{ts}^{-1}a$. It follows that $M_{ts}^{-1} a = b$ by the associativity of matrix multiplication. If $a$ is known   then  $M_{ts}^{-1}$ is the matrix that can be similarly used to find $b$, as required.
\end{proof}


\begin{definition}
Two change of basis matrices $M_{ts}$ and $M_{vu}$ are equal if and only if they have  the same elements, $t=v$ and $s=u$.
\end{definition}

\begin{lemma} \label{basis-unique}
Every  change of basis matrix is unique.
\end{lemma}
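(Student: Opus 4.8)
The plan is to read ``unique'' as the statement that, for each ordered pair of bases $(t,s)$ of $V$, there is exactly one matrix that serves as the change of basis matrix from $s$ to $t$. Concretely, I would show that if $M$ and $M'$ both satisfy the defining relation $M b_s = a_t$ (respectively $M' b_s = a_t$) for every vector $v$ with coordinate vectors $b_s$ and $a_t$ with respect to $s$ and $t$, then $M$ and $M'$ agree in every entry. Since they share the same pair of bases, the preceding equality definition then forces $M = M'$, which is exactly the uniqueness claim.

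The engine of the argument is the fundamental property that a basis gives a unique coordinate representation. Because $t = \{t_0, t_1, \ldots, t_n\}$ is a basis, every vector of $V$, and in particular each source basis vector $s_j$, has exactly one coordinate vector with respect to $t$. This is precisely the fact already used to declare the columns of a change of basis matrix well defined, so I would invoke it rather than re-derive it.

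Next I would pin down the columns of any such matrix. The coordinate vector of the basis vector $s_j$ with respect to its own basis $s$ is the standard unit vector $e_j$ (a $1$ in position $j$ and $0$ elsewhere), so specializing the defining relation to $b_s = e_j$ forces the output $a_t$ to be the coordinate vector of $s_j$ with respect to $t$. Since $M e_j$ is just the $j$-th column of $M$, this shows that the $j$-th column of \emph{any} change of basis matrix from $s$ to $t$ must equal the unique coordinate vector of $s_j$ with respect to $t$. Ranging over $j = 0, 1, \ldots, n$ shows that $M$ and $M'$ have identical columns, hence identical entries, and the conclusion follows.

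The argument is short, and the only real care needed, such as it is, lies in justifying that testing the defining relation on the unit coordinate vectors $e_0, \ldots, e_n$ is enough to determine the entire matrix, together with a clean citation of the uniqueness of coordinate representations with respect to a basis. Everything past that is bookkeeping about matching columns.
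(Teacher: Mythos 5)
Your proof is correct, but it takes a genuinely different route from the paper. The paper argues by contradiction using Lemma~\ref{inv-cob}: it supposes a second matrix $N = N_{ts}$ exists with $N \neq M$, asserts that $N$ is a right inverse of $M^{-1}$, and concludes $M = M(M^{-1}N) = (MM^{-1})N = N$. You instead argue directly from the uniqueness of coordinate representations: since $t$ is a basis, each $s_j$ has exactly one coordinate vector with respect to $t$, and specializing the defining relation to the unit coordinate vectors $e_j$ forces the $j$-th column of any candidate matrix to equal that coordinate vector, so all candidates coincide column by column. Your approach is more elementary (it does not invoke invertibility at all) and, arguably, more complete: the paper's step ``$N$ is its right inverse'' is asserted without justification, and justifying it would require showing that $M$ and $N$ act identically on coordinate vectors --- which is essentially the content of your column-by-column argument. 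What the paper's route buys is brevity and a tidy reuse of Lemma~\ref{inv-cob} within the groupoid framework; what yours buys is self-containment and an explicit identification of where uniqueness actually comes from, namely the uniqueness of the expansion of each $s_j$ in the basis $t$.
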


\begin{proof}

Suppose $M= M_{ts}$ is a change of basis matrix.  If $M$ is not unique, there is a change of basis matrix $N = N_{ts}$ that has the same dimensions as $M$ and $M \not= N$. 
The matrix $M^{-1}$ exists from Lemma~\ref{inv-cob}, and $N$ is its right inverse. We have $M = M (M^{-1} N)$  and $ M (M^{-1} N) = (M M^{-1}) N$, so that $M = N$ which is a contradiction that they are unequal. \end{proof}

%
%

\begin{theorem} \label{itsagroupoid}
Let $V$ be a vector space and suppose there is a set $S$  of $m$ bases of $V$. 
The set of change of basis matrices $M_{ts}$ where $s, t \in S$ is a connected groupoid $G$ of order $m^2$.
\end{theorem}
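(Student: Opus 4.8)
A groupoid is a category in which every morphism is invertible, so I would begin by identifying the morphisms and the composition law. The natural setup is to take the objects of $G$ to be the $m$ bases in $S$, and to regard each change of basis matrix $M_{ts}$ as a morphism from the object $s$ to the object $t$. Composition of morphisms is then matrix multiplication: given $M_{ts}$ and $M_{uv}$, the product $M_{ts}M_{uv}$ should be a morphism precisely when $v = t$ (the codomain of the second matches the domain of the first), yielding $M_{ts}M_{sv} = M_{tv}$. So the first key step is to establish this composition identity, namely that the product of two composable change of basis matrices is again a change of basis matrix and equals $M_{tv}$.

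**Establishing composition is the crux.** To prove $M_{ts}M_{sv} = M_{tv}$, I would argue at the level of coordinate vectors: if $c_v$ is the coordinate vector of a fixed vector $w$ with respect to $v$, then $M_{sv}c_v$ is its coordinate vector with respect to $s$, and applying $M_{ts}$ gives its coordinate vector with respect to $t$. Since $M_{tv}$ is by definition the unique matrix sending $v$-coordinates to $t$-coordinates (uniqueness is guaranteed by Lemma~\ref{basis-unique}), the two products must agree. This is the step I expect to be the main obstacle, not because it is deep but because it requires care: I must confirm that the composite really is a genuine change of basis matrix between two members of $S$, so that composition is well-defined and stays inside $G$; Lemma~\ref{basis-unique} is the tool that pins down the result.

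**The remaining axioms follow quickly.** Associativity of composition is immediate from associativity of matrix multiplication. For identities, the matrix $M_{ss}$ is the identity matrix (the $s$-coordinates of the $s$-basis vectors are the standard unit vectors), and it acts as the identity morphism at the object $s$, satisfying $M_{ts}M_{ss} = M_{ts}$ and $M_{ss}M_{sv} = M_{sv}$. For inverses, Lemma~\ref{inv-cob} supplies exactly what is needed: every $M_{ts}$ has an inverse $M_{ts}^{-1} = M_{st}$, which is itself a change of basis matrix in $G$ and is a two-sided inverse since $M_{st}M_{ts} = M_{ss}$ and $M_{ts}M_{st} = M_{tt}$ by the composition law. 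Together these verify that $G$ is a groupoid.

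**Finally, connectedness and order.** The groupoid is connected because for any two objects $s$ and $t$ in $S$ the morphism $M_{ts}$ exists, so there is always an (iso)morphism between any pair of objects. For the order, I would count the morphisms: since there is exactly one change of basis matrix $M_{ts}$ for each ordered pair $(t,s)$ of bases in $S$, and $|S| = m$, the total number of morphisms is $m^2$. This completes the identification of $G$ as a connected groupoid of order $m^2$.
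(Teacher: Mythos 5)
Your proposal is correct and follows essentially the same route as the paper: a direct verification of the groupoid axioms, with Lemma~\ref{inv-cob} supplying inverses and Lemma~\ref{basis-unique} supplying the uniqueness needed to count $m^2$ elements. The only difference is cosmetic --- you use the categorical formulation (objects and morphisms) where the paper uses the equivalent algebraic one (a partial binary operation) --- and you are in fact slightly more careful than the paper in justifying the composition identity $M_{ts}M_{sv}=M_{tv}$ via coordinate vectors and uniqueness rather than asserting it.
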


\begin{proof}
The axioms of the algebraic definition of a groupoid~\cite{brandt,brown} are met, i.e. 
\begin{itemize}
\item The elements of $G$ are the change of basis matrices and the identity matrices $M_{tt}$ where $t \in S$ and they all have the same dimensions.
\item There is a partially defined associative binary operation $\circ$  on the  elements of $G$. For every $M, N \in G$, the operation $\circ$  is defined if and only if $M$ has the form $M_{ut}$, and $N$ has the form $N_{ts}$ in which case $M \circ N = M_{us} \in G$. The operation $\circ$ is matrix multiplication and we omit the symbol.
\item If $M = M_{ts}$ is any element of the groupoid, then its inverse $M^{-1} = M_{st}$ is an element of the groupoid and it is defined by matrix inversion of $M$ following Lemma~\ref{inv-cob}.
\item If $M = M_{ts}$ is any element of the groupoid, then $M M^{-1} = M_{tt}$ is an identity element of the groupoid.
\end{itemize}

The groupoid is connected or transitive, because for any two bases $s$ and $t$  in $S$, there exists a matrix $M_{ts} \in G$.

For each of the $m$ bases there are $m^2$ permutations of them taken two at a time with repetitions, which is the order of the groupoid following Lemma~\ref{basis-unique}.  
\end{proof}

\subsection{Example}
The set $S$ comprises the three bases: monomials $M$; Legendre polynomials $P$ expressed in terns of monomials; and Chebyshev polynomials of the first kind $T$ expressed in terns of monomials. All basis vectors have even degree from $0$ to $6$.
The change of basis matrix 
\[
M_{TP} = 
\begin{bmatrix}
1 & \frac{1}{4} & \frac{9}{64} & \frac{25}{256}\\
\\
0 & \frac{3}{4} & \frac{5}{16} & \frac{105}{512}\\
\\
0 & 0 & \frac{35}{64} & \frac{63}{256}\\
\\
0 & 0 & 0 &\frac{231}{512}
\end{bmatrix}
\]

This gives
\[
\begin{bmatrix}
1 & \frac{1}{4} & \frac{9}{64} & \frac{25}{256}\\
\\
0 & \frac{3}{4} & \frac{5}{16} & \frac{105}{512}\\
\\
0 & 0 & \frac{35}{64} & \frac{63}{256}\\
\\
0 & 0 & 0 &\frac{231}{512}
\end{bmatrix}
\begin{bmatrix}
0\\
\\
1 \\
\\
0\\
\\
2
\end{bmatrix}
=
\begin{bmatrix}
\frac{57}{128}\\
\\
\frac{297}{256} \\
\\
\frac{63}{128}\\
\\
\frac{231}{256}
\end{bmatrix}
\]

In terms of polynomials, we have
\begin{align*}
2 P_6(x) + P_2(x) &= \frac{231}{256} T_6(x) + \frac{63}{128} T_4(x) + \frac{297}{256} T_2(x) + \frac{57}{128}T_0(x)\\
&= \frac{1}8(231 x^6 - 315 x^4 + 117 x^2 - 9).
\end{align*}

From Boyd~\cite{boyd}, the matrix 
\[
M_{PT} = 
\begin{bmatrix}
1 & -\frac{1}{3} & -\frac{1}{15} & -\frac{1}{35}\\
\\
0 & \frac{4}{3} & -\frac{16}{21} & -\frac{4}{21}\\
\\
0 & 0 & \frac{64}{35} & -\frac{384}{385}\\
\\
0 & 0 & 0 &\frac{512}{231}
\end{bmatrix}
\]
which is the inverse of $M_{TP}$. The matrix $M_{MP}$ is
\[
\begin{bmatrix}
1 & -\frac{1}{2} & \frac{3}{8} & -\frac{5}{16}\\
\\
0 & \frac{3}{2} & -\frac{15}{4} & \frac{105}{16}\\
\\
0 & 0 & \frac{35}{8} & -\frac{315}{16}\\
\\
0 & 0 & 0 &\frac{231}{16}
\end{bmatrix}
\]

From these we can find the other groupoid elements, $ M_{PM}$, $M_{TM} = M_{TP} M_{PM}$, $M_{MT}$,  $I_{MM}$, $I_{PP}$ and $I_{TT}$. There are $m^2 =9$ elements in total. There are three identity matrices, but they are unequal because of their types.

\subsection{The Exchange Basis}
\label{EB}
\begin{definition} \label{exbasis}
Given two change of basis mappings ${\cal T}_2: c \rightarrow b$  and  ${\cal T}_1: b \rightarrow d$,   the  exchange basis is $b$.
\end{definition}

Suppose that we wish to find $\alpha_3$ for a pair of bases  $s$ and $t$ of a subspace of $\mathbb{R}[X]$.
It is convenient to use the monomials $M$ as the exchange basis because $\alpha$ is sometimes known in the literature~\cite{as,dlmf,handscomb} for   ${\cal T}_2: s \rightarrow M$ and ${\cal T}_1: M \rightarrow t$.

\section{Optimizing with the Triangular Basis}

From the change of basis groupoid, given two change of basis matrices $M_{cb} M_{bd}$ the elements of $M_{cd}$ are formed from the vector dot products of the rows of $M_{cb}$ and the columns of $M_{bd}$. Bases that satisfy the following condition lead to an optimization of these vector dot products.
\begin{definition} \label{triangular-basis}
Let $V$ be a vector space of finite dimension $n > 0$ and $B_1$ and $B_2$ be bases of $V$.

The basis $B_2$ is a triangular basis with respect to $B_1$ if and only if there is a permutation $\{v_1, \ldots , v_n \}$ of the coordinate vectors of $B_2$ with respect to $B_1$ such that the  $n \times n$ matrix $\left[ v_1^T, \ldots , v_n^T \right]$ is an upper triangular matrix.
\end{definition}

%
%
%
%

An example of an orthogonal basis in $\mathbb{R}^8$ that is not a triangular basis is the Haar function wavelet matrix~\cite{strang93}:
\begin{gather*}
W_8 = 
\begin{bmatrix}
1 & \phantom{-}1 & \phantom{-}1 & \phantom{-}0& \phantom{-}1&\phantom{-}0 &\phantom{-}0&\phantom{-}0\\
1 & \phantom{-}1 & \phantom{-}1 &\phantom{-} 0 & -1& \phantom{-}0&\phantom{-}0&\phantom{-}0\\
1 & \phantom{-}1 & -1 & \phantom{-}0 &\phantom{-}0 &\phantom{-}1&\phantom{-}0&\phantom{-}0\\
1& \phantom{-}1 & -1&  \phantom{-}0 &\phantom{-}0 & -1&\phantom{-}0&\phantom{-}0\\
1 & -1 & \phantom{-}0 & \phantom{-}1 &\phantom{-}0&\phantom{-}0& \phantom{-}1&\phantom{-}0\\
1 & -1 & \phantom{-}0 &\phantom{-}1 &\phantom{-}0 &\phantom{-}0& -1&\phantom{-}0\\
1 & -1 & \phantom{-}0& -1 & \phantom{-}0&\phantom{-}0&\phantom{-}0&\phantom{-}1\\
1 &-1 & \phantom{-}0& -1 &\phantom{-}0&\phantom{-}0&\phantom{-}0& -1
\end{bmatrix}
\end{gather*}

The basis vectors $(1, 1, 1, 1, 1, 1, 1, 1)$ and $(1, 1, 1, 1, -1, -1, -1, -1)$  with respect to the standard basis both have non-zero coordinates in the eight dimension.

An example of a triangular basis is Chebyshev polynomials of the first kind of odd degree up to degree $5$. This basis is
\[
\{ x, \ \ 4x^3 - 3x, \ \  16 x^5 -20 x^3 + 5x\}
\]
The basis vectors of this subspace of $\mathbb{R}[X]$  have the following coordinate vectors wth respect to the basis $\{x, x^3, x^5\}$:
\[
(1, 0, 0), \ \ (-3, 4, 0), \ \ (5, -20, 16)
\] and the matrix
\begin{gather*}
\begin{bmatrix}
1 & -3 & \phantom{-}5 \\
0 & \phantom{-}4 & -20\\
0 & \phantom{-}0 & \phantom{-}16
\end{bmatrix}
\end{gather*} is an upper triangular matrix.

This optimization is related to the Schuster-Dilts triangular matrix acceleration optimization. It was found independently by Schuster~\cite{schuster} and Dilts~\cite{dilts}. Boyd~\cite[\S18.12.6]{boyd} gives an example of it involving a change of basis from $T_n(x)$ to $P_n(x)$ where they have even parity.

By excluding elements below the main diagonal, a $m \times m$ matrix can be represented with $\frac12 (m^2 + m)$ elements.

%
%
%

\section{The Dot Product Solution Method}

We consider a method of defining algebraically a change of basis function between two triangular bases of a vector space. The method uses the matrix  dot product of change of basis matrices.

\begin{definition}\label{coefffunction}
Suppose that $s$ and $t$ are triangular bases of a vector space $V$.
The mapping $\mathcal{T}: s \rightarrow t$ satisfies

\begin{equation} \label{mapF}
s_n= \sum_{k=0}^{n} \alpha(n, k)  t_{n - k}
\end{equation}where $\alpha(n, k) \in \mathbb{R}$, i.e. each basis vector of $s$ is a unique linear combination of the basis vectors of $t$.
The function $\alpha$ is called a coefficient function.
\end{definition}

There is a change of basis matrix equation from equation (\ref{mapF}):

\begin{gather*}
\begin{bmatrix}
\vdots & \cdots & \vdots & \vdots & \vdots \\
0 & \cdots & \alpha(n-3, 0) & \alpha(n-2, 1) & \alpha(n-1, 2)& \alpha(n ,3)\\
0 &\cdots & 0 & \alpha(n-2, 0) & \alpha(n-1, 1)& \alpha(n ,2)\\
0 & \cdots &0 & 0 & \alpha(n-1, 0)& \alpha(n ,1)\\
0 & \cdots &0 & 0 & 0 & \alpha(n, 0)
\end{bmatrix}
\begin{bmatrix}
\vdots \\
b_3 \\
b_2\\
b_1 \\
b_0
\end{bmatrix}
= 
\begin{bmatrix}
\vdots \\
a_3 \\
a_2\\
a_1\\
a_0
\end{bmatrix}
\end{gather*}

The matrix on the left is a square matrix. This represents a function that maps a a matrix of coordinates of a vector in the domain basis $s$ in the second matrix to the last matrix which is the coordinates of the same vector in the range basis $t$.
Given a change of basis mapping that satisfies equation~(\ref{mapF}), 
a solution method is a method to find algebraic expressions for the coefficient function $\alpha(n, k)$.

For example, when $n=6$, $M_{ts}$ is a $7 \times 7$ matrix. When it is applied to the coordinate matrix for $s_3(x)$ 
\begin{gather*}
M_{ts}
\begin{bmatrix}
0\\
0\\
0\\
1\\
0\\
0\\
0\\
\end{bmatrix}
= 
\begin{bmatrix}
\alpha(3,3)\\
\alpha(3,2)\\
\alpha(3,1)\\
\alpha(3,0)\\
0\\
0\\
0\\
\end{bmatrix}
\end{gather*}
it produces the fourth column of $M_{ts}$ that has the following coordinates of $s_3(x)$ in the basis $t$:
$(\alpha(3,3), \alpha(3,2), \alpha(3,1), \alpha(3,0), 0, 0, 0)$. From equation~\ref{mapF}, we have
\[
s_3(x) = \sum_{k=0}^{3} \alpha(3, k)  t_{3 - k}(x).
\]

Given two  change of basis matrices of the same dimensions, $M_{ts}$ and $M_{sv}$  we have $M_{tv} = M_{ts} M_{sv}$ from Lemma~\ref{itsagroupoid}. 

More specifically, if the elements in $M_{ts}$ are applications of $\alpha_1$ and those of $M_{sv}$ are $\alpha_2$, then we can find the elements that are applications of $\alpha_3$ in $M_{tv}$ by matrix multiplication

The leftmost matrix is a square upper triangular matrix with $n+1$ rows and columns. 
The expression for $\alpha_3$ is found from the dot product of the $i$th row of $M_{ts}$ with the $j$th column of $M_{sv}$. Given such a $h \times h$ matrix $M$, we can show that the entry $M_{i, j} = \alpha(n - (h-j), j -i)$ where $1 \leq i, j \leq h$ and $j \geq i$. We use the convention that the sum below equals $0$ when $i > j$.

 
\begin{equation} \label{alpha-new}
\alpha_3(n - (h-j), j-i)= \sum_{v=i}^{j} \alpha_1(n - (h-v), v -i) \alpha_2(n - (h-j), j - v)
\end{equation}
The entry for $\alpha(m, k)$ is in row $m-n + h - k$ and column $m-n + h$ where $0 \leq k \leq m$ and $n - h + 1 \leq  m \leq n$.

It is convenient to consider the dot product with the last column of $M_{sv}$ when $j = h$. 
The matrix equation then has the following form.
\begin{gather*}
\begin{bmatrix}
\vdots & \vdots & \vdots & \vdots & \vdots \\
0 & \alpha_1(n-3, 0) & \alpha_1(n-2, 1) & \alpha_1(n-1, 2)& \alpha_1(n ,3)\\
0 & 0 & \alpha_1(n-2, 0) & \alpha_1(n-1, 1)& \alpha_1(n ,2)\\
0 & 0 & 0 & \alpha_1(n-1, 0)& \alpha_1(n ,1)\\
0 & 0 & 0 & 0 & \alpha_1(n, 0)
\end{bmatrix}
\begin{bmatrix}
\vdots \\
\alpha_2(n, 3) \\
\alpha_2(n, 2)\\
\alpha_2(n, 1) \\
\alpha_2(n, 0)
\end{bmatrix}
= 
\begin{bmatrix}
\vdots \\
\alpha_3(n, 3) \\
\alpha_3(n, 2)\\
\alpha_3(n, 1)\\
\alpha_3(n, 0)
\end{bmatrix}
\end{gather*}

Equation~(\ref{alpha-new}) simplifies to
\begin{equation} \label{alpha-new2}
\alpha_3(n, k)= \sum_{v=0}^{k} \alpha_1(n - v, k-v) \alpha_2(n , v)
\end{equation} where $0 \leq k \leq n$, which provides a solution.

\subsection{Further Optimizations}
\label{opt-subspace}

The evaluation of $\alpha_3(n, k)$ in equation (\ref{alpha-new2}) can be optimized when there are values of $\alpha_1(n -v, k-v)$ or $\alpha_2(n, v)$ that can be excluded. For example, Chebyshev polynomials of the first  kind $T_n(x)$ have definite parity~\cite[Definition 21, \S 8.2]{boyd}. The change of basis matrix of them up to order $4$ with respect to the monomials is
\begin{gather*}
M_{MT} = 
\begin{bmatrix}
1 & 0 & -1& \phantom{-}0& \phantom{-}1\\
0 & 1 & \phantom{-}0 & -3& \phantom{-}0\\
0 & 0 & \phantom{-}2 &\phantom{-} 0& -8\\
0 & 0 & \phantom{-}0 & \phantom{-}4 & \phantom{-}0\\
0&0&\phantom{-}0&\phantom{-}0&\phantom{-}8
\end{bmatrix}
\end{gather*}

In terms of the rows above the main diagonal, an element $m_{i,j}$ of this matrix will be $0$ when $j-i$ is odd and $j \geq i$ where $1 \leq i, j, \leq 5$. For the columns, we may only be interested in basis polynomials that all have even degree, or odd degree. In this example, Chebyshev polynomials of the first kind of even degree span the  same subspace as $\{1, x^2, x^4\}$ and those of odd degree span the same subspace as $\{x, x^3\}$.

If the change of basis just involves Chebyshev polynomials of the first kind of even degree, the following compressed matrix can suffice.
\begin{gather*}
\begin{bmatrix}
1 &  -1&  \phantom{-}1\\
0 &  \phantom{-}2 & -8\\
0&\phantom{-}0&\phantom{-}8
\end{bmatrix}
\end{gather*}
and similarly for those of odd degree:
\begin{gather*}
\begin{bmatrix}
 1  & -3\\
0 &  \phantom{-}4\\
\end{bmatrix}
\end{gather*}

Another compression of a change of basis matrix expressed using the monomials occurs when vectors of degree less than a fixed number $k$ are omitted. For example, with $M_{MT}$ we may want to exclude the vectors $1$ and $x$ from the basis $\{1, x, x^2, x^3 x^4\}$, i.e. $k = 1$. We can use the following change of basis matrix instead of $M_{MT}$.
\begin{gather*}
\begin{bmatrix}
2 & 0& -8\\
0 & 4 & \phantom{-}0\\
0&0&\phantom{-}8
\end{bmatrix}
\end{gather*}

Combinations of these can be used too, e.g. Chebyshev polynomials of even degree excluding terms of degree less than $2$, which has basis in the monomials of $\{x^2, x^4\}$, and change of basis matrix
\begin{gather*}
\begin{bmatrix}
2 & -8\\
0&\phantom{-}8
\end{bmatrix}
\end{gather*}

Equation (\ref{alpha-new2}) is an optimization for finding the last column of a $m \times m$ matrix $M_{tv} = M_{ts}M_{sv}$ where $s, t$ and $v$ are triangular bases of the same vector space. The elements below the main diagonal are excluded. We now consider four specific further optimizations of equation (\ref{alpha-new2}) when other elements of the matrices can be excluded. These cases are:
\begin{enumerate}
\item Exclude the first $k_1$ rows and first $k_1$ columns
\item Exclude the last $k_2$ rows and last $k_2$ columns
\item Exclude  elements $m_{i, j}$ where $j - i$ is odd, $1 \leq i, j \leq m$ and $j \geq i$
\item Exclude elements $m_{i, j}$ where $1 \leq i, j \leq m$,  $m$ is odd and either $i$ or $j$ is even.
\end{enumerate}

An upper triangular $m \times m$ matrix can be represented with $\frac12 (m^2 + m)$ elements. The four  optimizations above 
further reduce the number of elements required. The number of elements that are included are, respectively:

\begin{align}
&\:\:\:\:\:\:\:\frac12 ((m - k_1)^2 + (m - k_1)) \label{opt1}\\
&\:\:\:\:\:\:\:\frac12 ((m - k_2)^2 + (m - k_2))\label{opt2}\\
& \left\{
\begin{array}{ll} 
\frac14 m(m+2) & \mbox{if $m$ is even} \label{opt3}\\
\\
\frac14 (m+1)^2 & \mbox{if  if $m$ is odd}\\
\end{array}
\right. \\
  &\left\{
\begin{array}{ll} 
\frac18 m (m+2) & \mbox{if $m$ is even} \label{opt4}\\
\\
\frac18 (m+1)(m+3) & \mbox{if  if $m$ is odd}\\
\end{array}
\right.
\end{align}

The last two expressions for the included elements can be shown by mathematical induction.

In equations (\ref{c12}) to (\ref{c4}) below, the coefficient functions \ are with respect to the original $m \times m$ matrices. 
In terms of bases, examples of these optimizations occur when changing bases between classical orthogonal polynomials. Case 1 involves omitting the terms of $t$ and $v$ of degree not exceeding $k_1 -1$. Case 2 occurs when omitting terms of $t$ and $v$ of degree at least $m - k_2$ When at least one of $t$ and $v$ has polynomials of definite parity and includes all polynomials of degree less than $m$ then case 3 applies. Case 4 occurs when both $t$ and $v$ have definite parity and we are interested only in polynomials either of even or odd degree.

For cases 1 and 2 we have
\begin{equation} \label{c12}
\alpha_3(n, k)= \sum_{v=0}^{k} \alpha_1(n  - v, k-v) \alpha_2(n , v)
\end{equation} where $n = m - k_2 - 1$ and $0 \leq k  \leq n - k_1$.
\medskip

In case 3, the dimensions of the matrices are not decreased. We have
\begin{equation} \label{c3}
\alpha_3(n, k)= \sum^{k}_{\mathclap{\substack{v = 0 \\ k-v \mbox{\small \; even}}}} \alpha_1(n  - v, k-v) \alpha_2(n , v)
\end{equation} where $n = m  - 1$ and $0 \leq k  \leq n$.

In case 4,  we can assume that $m$ is odd without loss of generality. Otherwise, if $m$ is even, the first column would be excluded and the first row would alternate between excluded elements and $0$. It follows that the first row can also be excluded, which is case 1, above with $k_1 = 1$.

The dimensions of $M_{ts}$ after elements are excluded is $\lceil \frac{m}2\rceil$.


\begin{equation}  \label{c4}
\alpha_3(n, k)= \sum_{v=0}^{k} \alpha_1(n -2v, 2(k-v)) \alpha_2(n , 2v)
\end{equation} where $n = m-1$ and $0 \leq k \leq \frac{n}2$.

We can express equation (\ref{c4}) using coefficient functions for the matrices with excluded elements.

\begin{definition}
\begin{equation} \label{alpha-dash}
\beta(n, k) = \alpha(n, 2k) 
\end{equation} where $0 \leq k \leq \lfloor \frac{n}2 \rfloor$.
\end{definition}

We then obtain equation (\ref{alpha-new2}) with respect to the matrices with excluded elements:
\[
\beta_3(n, k)= \sum_{v=0}^{k} \beta_1(n - 2v, k-v) \beta_2(n , v)
\] where $0 \leq k \leq \frac{n}2$.

\medskip

For example, given 
\begin{gather*}
\begin{bmatrix}
\vdots & \cdots & \vdots & \vdots &\vdots & \vdots & \vdots  \\
0 & \cdots & \alpha_1(n-4, 0) & \alpha_1(n-3,1) & \alpha_1(n-2, 2) & \alpha_1(n-1, 3)& \alpha_1(n ,4)\\
0 & \cdots & 0& \alpha_1(n-3, 0) & \alpha_1(n-2, 1) & \alpha_1(n-1, 2)& \alpha_1(n ,3)\\
0 & \cdots &0 &0&  \alpha_1(n-2, 0) & \alpha_1(n-1, 1)& \alpha_1(n ,2)\\
0 & \cdots &0 & 0&  0 & \alpha_1(n-1, 0)& \alpha_1(n ,1)\\
0 & \cdots & 0 & 0 & 0 & 0 & \alpha_1(n, 0)
\end{bmatrix}
\begin{bmatrix}
\vdots \\
\alpha_2(n, 4) \\
\alpha_2(n, 3) \\
\alpha_2(n, 2)\\
\alpha_2(n, 1) \\
\alpha_2(n, 0)
\end{bmatrix}
\end{gather*}

and $n = 8$, suppose $k=2$, so that
\begin{align*}
\alpha_3(8, 4)=& \alpha_1(8, 4) \alpha_2(8, 0) + \alpha_1(6, 2) \alpha_2(8, 2) + \alpha_1(4, 0) \alpha_2(8, 4) \\
\end{align*} with respect to the matrices with excluded elements

\begin{gather*}
\begin{bmatrix}
 \vdots & \cdots & \vdots & \vdots & \vdots \\
 0 & \cdots &\alpha_1(n-4, 0) & \alpha_1(n-2, 2)& \alpha_1(n ,4)\\
 0 & \cdots &0 & \alpha_1(n-2, 0)& \alpha_1(n ,2)\\
 0 & \cdots &0 & 0 & \alpha_1(n, 0)
\end{bmatrix}
\begin{bmatrix}
\vdots \\
\alpha_2(n, 4) \\
\alpha_2(n, 2)\\
\alpha_2(n, 0)
\end{bmatrix}
\end{gather*}

This gives
\[
\beta_3(8, 2)  = \beta_1(8, 2) \beta_2(8, 0) + \beta_1(6, 1) \beta_2(8, 1) + \beta_1(4, 0) \beta_2(8, 2),
\] which is the dot product with respect to the coefficient functions $\beta_1(x,v)$ and  $\beta_2(x,v)$ by applying equation
(\ref{alpha-dash}):

\begin{gather*}
\begin{bmatrix}
 \vdots & \cdots & \vdots & \vdots & \vdots \\
 0 & \cdots & \beta_1(n-4, 0) & \beta_1(n-2, 1)& \beta_1(n ,2)\\
 0 & \cdots & 0 & \beta_1(n-2, 0)& \beta_1(n ,1)\\
 0 & \cdots & 0 & 0 & \beta_1(n, 0)
\end{bmatrix}
\begin{bmatrix}
\vdots \\
\beta_2(n, 2) \\
\beta_2(n, 1)\\
\beta_2(n, 0)
\end{bmatrix}
\end{gather*}

\section{Examples with Orthogonal Polynomials}

Definitions of the the classical orthogonal polynomials are given in  Koornwinder et al.~\cite[\S 18.3]{dlmf}.  Three of these kinds of orthogonal polynomials are Chebyshev Polynomials of the first kind, $T_n(x)$, Laguerre polynomials $L_n(x)$, and Physicist's Hermite polynomials $H_n(x)$. There do not appear to be results on changing basis from Laguerre or Hermite polynomials to Chebyshev polynomials of the first kind.

The polynomials $T_n(x)$ and $H_n(x)$ have definite parity~\cite[Definition 21, \S 8.2]{boyd}, and the Laguerre polynomials $L_n(x)$ do not have definite parity.   $T_n(x)$ $L_n(x)$ and $H_n(x)$ all have degree $n$ where $n \geq 0$.

\subsection{Example of Case 3}

If we want to change basis from the Laguerre polynomials up to degree $5$ to Chebyshev polynomials of the first kind, we can use the change of basis equation $M_{TL} = M_{TM} M_{ML}$ where
\begin{center}
$M_{TM} = $
\begin{doublespace}
\noindent\(\left[
\begin{array}{cccccccc}
 1 & 0 & \frac{1}{2} & 0 & \frac{3}{8} & 0  \\
 0 & 1 & 0 & \frac{3}{4} & 0 & \frac{5}{8}  \\
 0 & 0 & \frac{1}{2} & 0 & \frac{1}{2} & 0  \\
 0 & 0 & 0 & \frac{1}{4} & 0 & \frac{5}{16}  \\
 0 & 0 & 0 & 0 & \frac{1}{8} & 0  \\
 0 & 0 & 0 & 0 & 0 & \frac{1}{16}  \\
\end{array}
\right]\)
\medskip

and $M_{ML} =$
\noindent\(\left[
\begin{array}{cccccccc}
 1 & \phantom{-}1 & \phantom{-}1 & \phantom{-}1 &  \phantom{-}1 &  \phantom{-}1  \\
 0 & -1 & -2 & -3&  -4 &  -5  \\
 0 & \phantom{-}0 &  \phantom{-}\frac12 &  \phantom{-}\frac32 &  \phantom{-}3 &  \phantom{-}5  \\
 0 & \phantom{-}0 &  \phantom{-}0 &  -\frac16 &  -\frac{2}3 & -\frac53  \\
 0 & \phantom{-}0 & \phantom{-}0 &  \phantom{-}0 &  \phantom{-}\frac{1}{24} &  \phantom{-}\frac{5}{24}  \\
 0 & \phantom{-}0 &  \phantom{-}0 &  \phantom{-}0 &  \phantom{-}0 &  -\frac{1}{120}  \\
\end{array}
\right]\)
\end{doublespace}
\end{center}

The product is
\begin{center}
$M_{TL} = $
\begin{doublespace}
\noindent\(\left[
\begin{array}{cccccccc}
 1 & \phantom{-}1 & \phantom{-}\frac54 & \phantom{-}\frac74 &  \phantom{-}\frac{161}{64} &  \phantom{-}\frac{229}{64} \\
 0 & -1 & -2 & -\frac{25}8&  -\frac92 &  -\frac{1201}{192}  \\
 0 & \phantom{-}0 &  \phantom{-}\frac14 &  \phantom{-}\frac34 &  \phantom{-}\frac{73}{48} &  \phantom{-}\frac{125}{48}  \\
 0 & \phantom{-}0 &  \phantom{-}0 &  -\frac1{24} &  -\frac{1}6 & -\frac{161}{384}  \\
 0 & \phantom{-}0 & \phantom{-}0 &  \phantom{-}0 &  \phantom{-}\frac{1}{192} &  \phantom{-}\frac{5}{192}  \\
 0 & \phantom{-}0 &  \phantom{-}0 &  \phantom{-}0 &  \phantom{-}0 &  -\frac{1}{1920}  \\
\end{array}
\right]\)
\end{doublespace}
\end{center} and, for example, from the fifth column, we have
\begin{align*}
L_4(x) =& \frac{1}{192}T_4(x) - \frac16T_3(x) + \frac{73}{48} T_2(x) - \frac92 T_1(x) + \frac{161}{64} T_0(x)\\
=&\frac{x^4}{24} - \frac{2 x^3}3 + 3 x^2 - 4x + 1
\end{align*}

Since $L_n(x)$ is not definite and $T_n(x)$ we can only exclude the elements that are zero in the upper triangle of $M_{TM}$ in finding $\alpha_3(n,k)$. The dimensions of the matrices are unchanged.

From Mason and Handscomb~\cite[equation (2.14)]{handscomb}, and equivalently Tao~\cite{tao}
\begin{equation}
x^n = 2^{1-n} {\sum_{k=0}^{\lfloor \frac{n}2 \rfloor}}{}^{'} {{n} \choose {k}} T_{n-2k}(x)
\end{equation} where the primed sum means  that its $k$th term is halved if $n$ is even and $k = \frac{n}2$.

We have
\[
\alpha_1(n,k) = 
\left\{
\begin{array}{ll} 
2^{1-n} {n \choose {k/2}} & \mbox{if $k$ is even and $k \not=n$}\\
2^{-n} {n \choose {k/2}} & \mbox{if $k$ is even and $k = n$}\\
0 & \mbox{if $k$ is odd.}
\end{array}
\right.
\] where $0 \leq k \leq n$.

From equation 18.5.12 of Koornwinder et al.~\cite{dlmf} for the generalized Laguerre polynomials
\[
L_n^{(\alpha)}(x) = \sum_{l=0}^n \frac{(\alpha + l+1)_{n-l}}{(n-l)! \; l!}(-x)^l
\] so that a change of basis matrix from the Laguerre polynomials to the monomials has the coefficient function
\begin{equation*} \label{cfL}
\alpha_2(n, k) = (-1)^{n-k} \frac{(n - k + 1)_{k}}{k! \; (n-k)!}
\end{equation*} where $(n - k + 1)_{k}$ is an application of the Pochhammer symbol or rising factorial.

Applying case 3, gives from equation (\ref{c3}),
\[
\alpha_3(n,k) = \sum^{k}_{\mathclap{\substack{v = 0 \\ k-v \mbox{\small \; even}}}} \alpha_1(n  - v, k-v) \alpha_2(n , v)
\] where $0 \leq k \leq n$.

We have
\[
L_n(x) = \sum_{v=0}^n \alpha_3(n, v) T_{n-v}(x)
\] where $0 \leq k \leq n$. From equation~(\ref{opt3}), the number of included elements is $\frac14 (6 \times 8) = 12$ which can also be seen from $M_{TM}$.

\subsection{Example of Cases 1 and 4}
Suppose that we want to change basis from Physicist's Hermite polynomials of  degree up to $7$ to Chebyshev polynomials of the first kind. A change of basis equation is $M_{TH} = M_{TM} M_{MH}$ where 

\begin{center}
$M_{TM} = $
\begin{doublespace}
\noindent\(\left[
\begin{array}{cccccccc}
 1 & 0 & \frac{1}{2} & 0 & \frac{3}{8} & 0 & \frac{5}{16} & 0 \\
 0 & 1 & 0 & \frac{3}{4} & 0 & \frac{5}{8} & 0 & \frac{35}{64} \\
 0 & 0 & \frac{1}{2} & 0 & \frac{1}{2} & 0 & \frac{15}{32} & 0 \\
 0 & 0 & 0 & \frac{1}{4} & 0 & \frac{5}{16} & 0 & \frac{21}{64} \\
 0 & 0 & 0 & 0 & \frac{1}{8} & 0 & \frac{3}{16} & 0 \\
 0 & 0 & 0 & 0 & 0 & \frac{1}{16} & 0 & \frac{7}{64} \\
 0 & 0 & 0 & 0 & 0 & 0 & \frac{1}{32} & 0 \\
 0 & 0 & 0 & 0 & 0 & 0 & 0 & \frac{1}{64} \\
\end{array}
\right]\)
\medskip

and $M_{MH} =$
\noindent\(\left[
\begin{array}{cccccccc}
 1 & 0 & -2 &  0 &  12 &  0 & -120 &  0 \\
 0 & 2 & 0 & -12 &  0 &  120 & 0 & -1680 \\
 0 & 0 &  4 &  0 & -48 &  0 &  720 &  0 \\
 0 & 0 &  0 &  8 &  0 & -160 &  0 &  3360 \\
 0 & 0 &  0 &  0 &  16 &  0 & -480 &  0 \\
 0 & 0 &  0 &  0 &  0 &  32 &  0 & -1344 \\
 0 & 0 &  0 &  0 &  0 &  0 &  64 &  0 \\
 0 & 0 &  0 &  0 &  0 &  0 &  0 &  128 \\
\end{array}
\right]\)
\end{doublespace}
\end{center}

If we want to  change basis for polynomials of odd degree, this is an example of cases 1 and 4 in \S \ref{opt-subspace}. The first column of $M_{TM}$ can be excluded because it corresponds to the representation of monomial $1$ which has even degree. The first row can also be excluded because the non-zero elements are coefficients of $T_0(x)$ in the respective representations of $\{x, x^2, \ldots, x^7\}$, and $T_0(x)$ has even parity. 

Excluding the first rows and columns gives the following $7 \times 7$ matrices, respectively.
\begin{center}
\begin{doublespace}

\noindent\(\left[
\begin{array}{cccccccc}
  1 & 0 & \frac{3}{4} & 0 & \frac{5}{8} & 0 & \frac{35}{64} \\
 0 & \frac{1}{2} & 0 & \frac{1}{2} & 0 & \frac{15}{32} & 0 \\
 0 & 0 & \frac{1}{4} & 0 & \frac{5}{16} & 0 & \frac{21}{64} \\
 0 & 0 & 0 & \frac{1}{8} & 0 & \frac{3}{16} & 0 \\
 0 & 0 & 0 & 0 & \frac{1}{16} & 0 & \frac{7}{64} \\
 0 & 0 & 0 & 0 & 0 & \frac{1}{32} & 0 \\
 0 & 0 & 0 & 0 & 0 & 0 & \frac{1}{64} \\
\end{array}
\right]\)
\medskip

\noindent\(\left[
\begin{array}{cccccccc}
 2 & 0 & -12 &  0 &  120 & 0 & -1680 \\
 0 &  4 &  0 & -48 &  0 &  720 &  0 \\
  0 &  0 &  8 &  0 & -160 &  0 &  3360 \\
 0 &  0 &  0 &  16 &  0 & -480 &  0 \\
 0 &  0 &  0 &  0 &  32 &  0 & -1344 \\
0 &  0 &  0 &  0 &  0 &  64 &  0 \\
0 &  0 &  0 &  0 &  0 &  0 &  128 \\
\end{array}
\right]\)
\end{doublespace}
\end{center}

From equation~(\ref{opt1}), this results in $\frac12 (7^2 + 7) = 28$ elements in the  upper triangles of these matrices.
By applying equation~(\ref{c12}), we have $k_1 = 1$, $k_2 = 0$, $m = 8$, $n = 7$ and $0 \leq k \leq 6$, so that
\[
\alpha_3(7, k) = \sum_{v=0}^{k} \alpha_1(7  - v, k-v) \alpha_2(7 , v).
\]

We have, for example,
\begin{align*}
\alpha_3(7, 3) =& \alpha_1(7, 3) \alpha_2(7, 0) + \alpha_1(6, 2) \alpha_2(7, 1) + \alpha_1(5, 1) \alpha_2(7, 2) + \alpha_1(4, 0) \alpha_2(7, 3)\\
=&  0 \times \alpha_2(7, 0) + \alpha_1(6, 2) \times 0  + 0 \times \alpha_2(7, 2) + \alpha_1(4, 0) \times 0\\
= & 0
\end{align*}
and
\begin{align*}
\alpha_3(7, 2) =& \alpha_1(7, 2) \alpha_2(7, 0) + \alpha_1(6, 1) \alpha_2(7, 1) + \alpha_1(5, 0) \alpha_2(7, 2)\\
=&  \frac{7}{64}  \times 128  + 0 \times 0  + \frac{1}{16}  \times -1344\\
= & -70
\end{align*}

The elements of the matrices that are zero are known from the outset.

The result of applying case 4 is to exclude these elements.
In the resulting $7 \times 7$ matrix, alternate rows and columns can be excluded: rows 2, 4, and 6, and columns 2, 4 and 6 because they correspond to the monomials $x^2$, $x^4$ and $x^6$ and their representations in the basis $\{T_1(x) , \ldots, T_7(x)\}$

The matrix $M_{MH}$ has the same pattern of zero and non-zero elements as $M_{TM}$, so that  elements that are zero in the upper triangle can be excluded from both matrices.

After applying case 1 and 4, we obtain the compressed  matrices which  have $\frac{1}4$ of the total elements, and $\frac5{32}$ of  them are included.
\begin{gather*}
\begin{bmatrix}
1 & \frac34 & \frac58 & \frac{35}{64}\\
\\
0 & \frac14 & \frac5{16} & \frac{21}{64}\\
\\
0 & 0 & \frac{1}{16} & \frac{7}{64}\\
\\
0&0&0& \frac1{64}
\end{bmatrix}
\begin{bmatrix}
2 & -12 & \phantom{-}120 & -1680\\
\\
0 & \phantom{-}8 & -160 & \phantom{-}3360\\
\\
0 & \phantom{-}0 & \phantom{-}32 & -1344\\
\\
0 & \phantom{-}0 & \phantom{-}0 & 128
\end{bmatrix}
=
\begin{bmatrix}
2 & -6 & \phantom{-}20 & \phantom{-}70\\
\\
0 & \phantom{-}2 & -30 &  \phantom{-}462\\
\\
0 & \phantom{-}0 & \phantom{-}2 & -70\\
\\
0&  \phantom{-}0 &  \phantom{-}0 &  \phantom{-}2
\end{bmatrix}
\end{gather*}

From equation~(\ref{opt4}), the number of included elements in each matrix after the optimization is $\frac18 (8 \times 10) = 10$.
From the third column of the last matrix, for example,
\begin{align*}
H_5(x) =& 2 T_5(x) - 30 T_3(x) + 20 T_1(x), \mbox{ i.e.}\\
32 x^5 - 160 x^3 + 120 x =& 2 (16x^5 - 20 x^3 + 5x) - 30 (4x^3 - 3x) + 20 x.
\end{align*}

Similarly, if we  we want to  change basis for polynomials of odd degree, this is an example of cases 2 and 4 in \S \ref{opt-subspace} where the last column and row of the matrices $M_{TM}$ and $M_{MH}$ are excluded, and also elements that are zero in the upper triangles. This gives

\begin{gather*}
\begin{bmatrix}
1 & \frac12 & \frac38 & \frac{5}{16}\\
\\
0 & \frac12 & \frac12 & \frac{15}{32}\\
\\
0 & 0 & \frac{1}{8} & \frac{3}{16}\\
\\
0&0&0& \frac1{32}
\end{bmatrix}
\begin{bmatrix}
1 & -2 & \phantom{-}12 & -120\\
\\
0 & \phantom{-}4 & -48 & \phantom{-}720\\
\\
0 & \phantom{-}0 & \phantom{-}16 & -480\\
\\
0 & \phantom{-}0 & \phantom{-}0 &  \phantom{-}64
\end{bmatrix}
=
\begin{bmatrix}
1 & 0 & -6 & \phantom{-}80\\
\\
0 & \phantom{-}2 & -16 &  \phantom{-}150\\
\\
0 & \phantom{-}0 & \phantom{-}2 & -48\\
\\
0&  \phantom{-}0 &  \phantom{-}0 &  \phantom{-}2
\end{bmatrix}
\end{gather*}

From the fourth column of the last matrix, for example, we have
\begin{align*}
H_6(x) =& 2 T_6(x) - 48 T_4(x) + 150 T_2(x) + 80 T_0(x)\\
=& 64 x^6 -480 x^4 + 720 x^2 - 120.
\end{align*}

 because the columns corresponding to monomials of even degree can be excluded, i.e. columns 2, 4, and 6
The vector space we use has the following basis represented by the monomials: $B = \{x, x^3, x^5, x^7\}$. The bases $\{H_1(x), H_3(x), H_5(x), H_7(x)\}$ and $\{T_1(x), T_3(x), T_5(x), T_7(x)\}$ are both triangular bases with respect to $B$.

From~\cite[equation 18.5.12]{dlmf}, we have
\begin{equation} \label{Hfun}
H_n(x) = n! \sum_{l=0}^{\lfloor \frac{n}2 \rfloor} \frac{{(-1)}^l {(2x)}^{n-2l}}{l! (n- 2l)!}
\end{equation}
For this example, it gives
\[
\alpha_2(n ,k) = 
\left\{
\begin{array}{ll} 
n! \frac{(-1)^{\frac{k}2} 2^{n - k}}{\frac{k}{2}! (n- k)!} & \mbox{if $k$ is even}\\
0 & \mbox{if $k$ is odd.}
\end{array}
\right.
\] where $0 \leq k \leq n$ because $H_n(x)$ has definite parity.

From equation~(\ref{alpha-dash}) we obtain,
\[
\beta_2(n ,k) = 
n! \frac{(-1)^{k} 2^{n - 2k}}{k! (n- 2k)!} 
\] where $0 \leq k \leq \lfloor \frac{n}2 \rfloor$.

This gives the following change of basis matrix $M_{B H}$:
\begin{gather*}
\begin{bmatrix}
2 & -12 & \phantom{-}120 & -1680\\
\\
0 & \phantom{-}8 & -160 & \phantom{-}3360\\
\\
0 & \phantom{-}0 & \phantom{-}32 & -1344\\
\\
0 & \phantom{-}0 & \phantom{-}0 & \phantom{-}128
\end{bmatrix}
\end{gather*}

From Mason and Handscomb~\cite[equation (2.14)]{handscomb}, and equivalently Tao~\cite{tao}
\begin{equation}
x^n = 2^{1-n} {\sum_{k=0}^{\lfloor \frac{n}2 \rfloor}}{}^{'} {{n} \choose {k}} T_{n-2k}(x)
\end{equation} where the primed sum means  that its $k$th term is halved if $n$ is even and $k = \frac{n}2$.

We have
\[
\alpha_1(n,k) = 
\left\{
\begin{array}{ll} 
2^{1-n} {n \choose {\frac{k}2}} & \mbox{if $k$ is even}\\
0 & \mbox{if $k$ is odd.}
\end{array}
\right.
\] where $0 \leq k \leq n$ and $n$ is odd.

From equation (\ref{alpha-dash}), it follows that
\[
\beta_1(n, k) = 2^{1-n} {n \choose k}
\]
where $0 \leq k \leq \lfloor \frac{n}2 \rfloor$.

This gives change of basis matrix $M_{T B}$:
\begin{gather*}
\begin{bmatrix}
1 & \frac34 & \frac58 & \frac{35}{64}\\
\\
0 & \frac14 & \frac5{16} & \frac{21}{64}\\
\\
0 & 0 & \frac{1}{16} & \frac{7}{64}\\
\\
0&0&0& \frac1{64}
\end{bmatrix}
\end{gather*}

We have $M_{TH} = M_{TB} M_{BH}$:
\begin{gather*}
\begin{bmatrix}
2 & -6 & \phantom{-}20 & \phantom{-}70\\
\\
0 & \phantom{-}2 & -30 &  \phantom{-}462\\
\\
0 & \phantom{-}0 & \phantom{-}2 & -70\\
\\
0&  \phantom{-}0 &  \phantom{-}0 &  \phantom{-}2
\end{bmatrix}
= 
\begin{bmatrix}
1 & \frac34 & \frac58 & \frac{35}{64}\\
\\
0 & \frac14 & \frac5{16} & \frac{21}{64}\\
\\
0 & 0 & \frac{1}{16} & \frac{7}{64}\\
\\
0&0&0& \frac1{64}
\end{bmatrix}
\begin{bmatrix}
2 & -12 & \phantom{-}120 & -1680\\
\\
0 & \phantom{-}8 & -160 & \phantom{-}3360\\
\\
0 & \phantom{-}0 & \phantom{-}32 & -1344\\
\\
0 & \phantom{-}0 & \phantom{-}0 & 128
\end{bmatrix}
\end{gather*}

From the third column,
\begin{align*}
H_5(x) =& 2 T_5(x) - 30 T_3(x) + 20 T_1(x), \mbox{ i.e.}\\
32 x^5 - 160 x^3 + 120 x =& 2 (16x^5 - 20 x^3 + 5x) - 30 (4x^3 - 3x) + 20 x.
\end{align*}

The columns of $M_{TH}$ for any $n> 0$ where $n$ is odd can be found by applying equation (\ref{alpha-new2}). The maximum degree of the basis polynomials is $m = 2n - 1$.
\begin{align*} 
\beta_3(n, k)=& \sum_{v=0}^{k} \beta_1(n - 2v, k-v) \beta_2(n , v)\\
=& \sum_{v=0}^{k} 2^{1-n+2v} {{n-2v} \choose {k-v}} \frac{n! {(-1)}^v 2^{n -2v}}{v! (n-2v)!}\\
=& 2 \sum_{v=0}^{k} {(-1)}^v   \frac{n!}{v! \; (k-v)! \; (n - k-v)!}     
\end{align*} where $0 \leq k \leq \lfloor \frac{n}2 \rfloor$.

This gives
\begin{equation}
H_n(x) = \sum_{k=0}^{\lfloor \frac{n}2 \rfloor} \beta_3(n, k) T_{n-2k}(x)
\end{equation}  where $n> 0$ and $n$ is odd.

\section{Conclusion}
Change of basis in finite vector spaces has numerous significant and widespread applications.   We  show in Theorem~\ref{itsagroupoid} that in general, the change of basis matrices of a set of $m$ bases of a finite vector space is a connected groupoid of order $m^2$. The groupoid function is matrix multiplication between matrices that have an exchange basis, following Definition~\ref{exbasis}. These change of basis matrices represent transformations between bases and can be typed with simple types.

We are concerned with finding  algebraic expressions for the elements of such change of basis matrices.
The groupoid function leads to a general method via a vector dot product for this. Five optimizations of it are defined.

Firstly, Definition~\ref{triangular-basis} of triangular basis specializes change of basis matrices to upper triangular ones. 
The algebraic expressions in change of basis matrices can be expressed by coefficient functions, $\alpha(n, k)$, following Definition~\ref{mapF}. Equation (\ref{alpha-new2}) provides the coefficient functions in this case.   This optimization has similarities  to  that by Schuster~\cite{schuster} and Dilts~\cite{dilts} in their algorithms that use spherical harmonics~\cite{boyd}.

Four additional optimizations of the evaluation of $\alpha(n, k)$ were defined in Section~\ref{opt-subspace}. They exclude elements that are known to be $0$ in the evaluation of vector dot products, or exclude rows and columns from a change of basis matrix.  Expressions are given for the number of included elements of a change of basis matrix.

These optimizations are applicable, for example, in the context of bases that are classical orthogonal polynomials and when the parity of basis polynomials is relevant, such as approximating even or odd functions. Examples of the optimizations are given with a change of basis from Hermite polynomials $H_n(x)$ of odd parity to Chebyshev polynomials $T_n(x)$. Another example is a change of basis from Laguerre polynomials  $L_n(x)$, that do not have definite parity, to $T_n(x)$.

\section*{Acknowledgment}
 I am grateful  to the College of Engineering \& Computer Science at The Australian National University for research support.

\end{document}